\newtheorem{thm}{Theorem}[section]
\newtheorem{coro}[thm]{Corollary}
\newtheorem{lem}[thm]{Lemma}
\newtheorem{prob}{Problem}
\numberwithin{equation}{section}
\numberwithin{enumi}{section}
\newcommand{\C}{\mathrm{Cl}_X}
\newcommand{\cl}[2]{\mathrm{Cl}_{#1}\!\left(#2\right)}
\newcommand{\Ord}{\mathrm{Ord}_X}
\newcommand{\Orb}{\mathrm{Orb}_X}
\newcommand{\m}[1]{\mathbb{#1}}
\newcommand{\N}{\mathbb{N}}
\newcommand{\ho}{ \mathcal{H}(X)}
\begin{document}

\title{Homogeneity degree of fans}

\author[G. Acosta]{Gerardo Acosta}
\address[G. Acosta]{Instituto de Matem\'{a}ticas, Universidad Nacional Aut\'{o}noma de M\'{e}xico, Ciudad Universitaria, D.F. 04510, Mexico}
\email{gacosta@matem.unam.mx}

\author[L. C. Hoehn]{Logan C. Hoehn}
\address[L.C. Hoehn]{Department of Computer Science and Mathematics, Nipissing University, 100 College Drive, Box 5002, North Bay, Ontario, Canada, P1B 8L7}
\email{loganh@nipissingu.ca}

\author[Y. Pacheco Ju\'arez]{Yaziel Pacheco Ju\'arez}
\address[Y. Pacheco]{Department of Computer Science and Mathematics, Nipissing University, 100 College Drive, Box 5002, North Bay, Ontario, Canada, P1B 8L7}
\email{yazielpj@nipissingu.ca}

\begin{abstract}
The homogeneity degree of a topological space $X$ is the number of orbits of the action of the homeomorphism group of $X$ on $X$.  We initiate a study of dendroids of small homogeneity degree, beginning with fans.  We classify all smooth fans of homogeneity degree $3$, and discuss non-smooth fans and prove some results on degree $4$.
\end{abstract}

\thanks{The second named author was supported by NSERC grant RGPIN 435518.  The third named author was partially supported by CONACYT grant 316891.}

\subjclass[2010]{Primary 54F50; Secondary 54F65, 54F15.}
\keywords{Homogeneity degree, Continuum, Fan, Smooth dendroid}

\maketitle

\section{Introduction}

For a topological space $X$ we denote by $\ho$ the group of homeomorphisms of $X$ onto itself.  Given $x \in X$, we denote by $\Orb(x)$ the orbit of $x$ under the action of $\ho$ on $X$; that is, $\Orb(x) = \{h(x): h \in \ho\}$.  The \emph{homogeneity degree} of $X$ is the number of orbits for the action of $\ho$ on $X$.  Alternatively, we say $X$ is \emph{$\frac{1}{n}$-homogeneous} to mean that $X$ has homogeneity degree $n$.

A space is \emph{homogeneous} if its homogeneity degree is $1$.  Homogeneity is a classical and well-studied notion in continuum theory; however, some important classes of spaces (e.g.\ dendroids) include no homogeneous spaces, and thus do not interact with this theory.  In this paper, we aim to demonstrate that an appropriate notion of homogeneity for dendroids, especially for fans, is $\frac{1}{3}$-homogeneous.

An emerging theme in the study of continua (or more generally compact metric spaces) is that spaces of low homogeneity degree tend to be rare and remarkable, and conversely many well-known and interesting compact metric spaces have a low degree of homogeneity.  This can be seen in recent work, e.g.\ in the classification of all homogeneous plane continua (more generally compact spaces in the plane) completed in \cite{HoehnOversteegen}, and the classification of all $\frac{1}{3}$-homogeneous dendrites \cite{13-dendrites}.

In this paper we initiate a study of dendroids of low homogeneity degree, beginning with fans.  We classify all $\frac{1}{3}$-homogeneous smooth fans, and prove that there are no smooth fans with homogeneity degree $4$.  We also consider some properties of non-smooth fans of homogeneity degrees $3$ or $4$.  It is not yet known whether these exist -- see Problem \ref{nonsmooth exist}.

\subsection{Definitions and notation}

For a topological space $X$ and $A \subset X$ the symbol $\C(A)$ denotes the closure of $A$ in $X$.  In this paper, all spaces considered will be metric spaces, and the metric will always be denoted by $d$.  When we refer to the distance or convergence of closed subsets of $X$, it will be understood that we are considering the Hausdorff metric.

A \emph{continuum} is a compact connected metric space.  An \emph{arc} is a space homeomorphic to the interval $[0,1] = I$.  We call any space homeomorphic to the standard middle-third Cantor set a \emph{Cantor set}.  A \emph{dendroid} is an arcwise connected and hereditarily unicoherent continuum.  Given a dendroid $X$ and points $p,q \in X$, we denote by $pq$ the unique arc in $X$ having $p$ and $q$ as its endpoints, and put $(pq) = pq - \{p,q\}, [pq) = pq - \{q\}$ and $(pq] = pq - \{p\}$.

For a dendroid $X$ we say that $p$ has \emph{order} $r$ (in the classical sense), in symbols $\Ord(p) = r$, if $p$ is a common endpoint of exactly $r$ arcs in $X$ which are disjoint from one another beyond $p$.  A point $p \in X$ is an \emph{endpoint} of $X$ if $\Ord(p) = 1$, a \emph{ramification point} of $X$ if $\Ord(p) > 2$, and an \emph{ordinary point} if $\Ord(p) = 2$.  We denote by $E(X)$, $R(X)$, and $O(X)$ the set of endpoints, the set of ramification points, and the set of ordinary points of $X$ respectively.  It is clear that any homeomorphism of $X$ onto itself preserves these three sets.

A \emph{fan} is a dendroid $X$ with exactly one ramification point called the \emph{top} of $X$.  Note that in a fan $X$ the sets $E(X)$, $O(X)$, and $R(X)$ are all nonempty, hence any fan has homogeneity degree greater than or equal to $3$.  If $X$ is a fan with top $t$, then $X = \bigcup_{e \in E(X)} te$ and $te \cap td = \{t\}$ for any two distinct $e,d \in E(X)$.  It is clear that $O(X)$ is dense in $X$.  From \cite[Theorem 7.5, p.311]{Lelek}, we have that $E(X)$ is a $G_\delta$ subset of $X$.

A dendroid $X$ is \emph{smooth} if there is a point $p \in X$ such that for each sequence $\{a_n\}_n$ of points of $X$ which converges to a point $a \in X$, the sequence of arcs $\{pa_n\}_n$ converges to the arc $pa$.  In a smooth fan, the top of the fan can always be used for the point $p$ in the previous definition.

\subsection{Effros' Theorem}

We recall some fundamental notions from descriptive set theory.  A \emph{Polish space} is a separable, completely metrizable space.  A subset of a Polish space $X$ is itself Polish if and only if it is a $G_\delta$ subset of $X$.  If $X$ is a compact metric space, then $\ho$ is a \emph{Polish group}, i.e.\ a separable, completely metrizable topological group.

It is a consequence of a theorem of Effros \cite[Theorem 2.1, p.39]{Effros} that if $X$ is a Polish space and $G$ is a Polish group acting transitively on $X$, then for any $x \in X$ and any neighborhood $U$ of the identity in $G$, the set $Ux = \{g \cdot x: g \in U\}$ is an open neighborhood of $x$ in $X$.  This result has seen extensive use in the study of homogeneous continua since it was first applied in the area by Ungar in \cite{Ungar}.  We will demonstrate in this paper that it is also useful in the study of spaces with homogeneity degree greater than $1$.  We will use the following consequence ($\dagger$) of the theorem.

An \emph{$\varepsilon$-homeomorphism} is an element $h \in \ho$ such that $d \big( x,h(x) \big) < \varepsilon$ for all $x \in X$.

\begin{itemize}
\item[($\dagger$)] Let $X$ be a compact metric space and let $x \in X$.  Suppose $\Orb(x)$ is a $G_\delta$ subset of $X$.  Then for any $\varepsilon > 0$ there exists $\delta > 0$ such that for any $v \in X$ with $d(x,v) < \delta$ there exists an $\varepsilon$-homeomorphism $h: X \to X$ such that $h(x) = v$.
\end{itemize}

\section{Preliminary results}

In \cite[Theorem 2.1, p.302]{Lelek}, it is proved that if $X$ is a dendroid, $E(X)$ does not contain any non-degenerate continua.

When we say that a subset $A$ of $X$ is an \emph{orbit of $X$}, we mean that $A$ is an orbit under the action of $\ho$ on $X$.

\begin{lem}
\label{lemanlc cantor}
Let $X$ be a smooth fan which is not locally connected.  If $E(X)$ is an orbit of $X$, then $O(X) \cap \C \big( E(X) \big) \neq \emptyset$ or $\C \big( E(X) \big)$ is a Cantor set.
\end{lem}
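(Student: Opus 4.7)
My plan is to set $F = \C(E(X))$, assume $F \cap O(X) = \emptyset$ (so that $F \subseteq E(X) \cup \{t\}$, where $t$ is the top of $X$), and show $F$ must then be a Cantor set. This means checking that $F$ is nonempty, compact, metric, totally disconnected, and perfect; the nonemptiness, compactness, and metrizability are immediate.

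Total disconnectedness is straightforward from the cited Lelek result that $E(X)$ contains no non-degenerate continuum. Indeed, any component $C$ of $F$ is either contained in $E(X)$---in which case it is a singleton by Lelek---or contains $t$; but if $C \ni t$ and $C$ contained an endpoint $e$, then since $X$ is a dendroid $C$ would contain the arc $te$, whose interior lies in $O(X)$, contradicting $F \cap O(X) = \emptyset$. So every component of $F$ is a singleton.

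For perfectness I use that $F$ is invariant under $\ho$: because $E(X)$ is an orbit, $h(E(X)) = E(X)$, hence $h(F) = F$, for every $h \in \ho$, so every homeomorphism permutes the isolated points of $F$. Suppose for contradiction that $F$ has an isolated point $x_0$. If $x_0 = t$, then a neighborhood $U$ of $t$ with $U \cap F = \{t\}$ in particular misses $E(X)$, contradicting $t \in \C(E(X)) \setminus E(X)$. If $x_0 \in E(X)$, then $\Orb(x_0) = E(X)$, so every endpoint is isolated in $F$; the only possible non-isolated point of $F$ is then $t$. Compactness of $F$ forces either $E(X)$ to be finite, or $E(X) = \{e_n\}_{n \in \N}$ with $e_n \to t$. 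In the first case $X$ is a simple $n$-od, hence locally connected, contrary to hypothesis. In the second case smoothness of $X$ at $t$ gives $te_n \to \{t\}$ in the Hausdorff metric, and then for any $\varepsilon > 0$ the set $B_\varepsilon(t) \cap X$ contains each arc $te_n$ entirely for $n$ large together with initial segments from $t$ of the finitely many remaining $te_m$, so it is a connected neighborhood of $t$; this shows $X$ is locally connected at $t$ (and trivially at its other points, since away from $t$ each arc is locally separated from the others), again a contradiction.

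Hence $F$ is a nonempty compact metric space that is totally disconnected and perfect, i.e.\ a Cantor set. The main obstacle---and the only place where smoothness is essential---is the local connectedness argument in the case where the endpoints form a sequence converging to $t$, which is precisely what is needed to rule out the ``convergent harmonic-type'' configuration while still having $E(X)$ be an orbit.
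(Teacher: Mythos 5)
Your proof is correct, and for the key step --- perfectness of $\C \big( E(X) \big)$ --- it takes a genuinely different route from the paper's. The paper invokes the theorem that the set of points of non-local connectedness of $X$ contains a continuum of convergence $K$; since $E(X) \cup \{t\}$ contains no non-degenerate continuum, $K$ meets $O(X)$, and smoothness then exhibits an endpoint $e$ that is a limit of the endpoints $e_n$ carrying the approximating subcontinua, whence (by the orbit hypothesis) no point of $\C \big( E(X) \big)$ is isolated. You instead argue by contradiction from orbit-invariance of $\C \big( E(X) \big)$: if one endpoint is isolated there then all are, compactness then forces $E(X)$ to be finite or a null sequence converging to $t$, and smoothness makes $X$ a simple $n$-od or an $F_\omega$-like fan, hence locally connected, contradicting the hypothesis. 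Your version is more elementary --- it avoids the continuum-of-convergence machinery entirely --- at the cost of an explicit local connectedness verification, and there is one small imprecision there: $B_\varepsilon(t) \cap X$ need not itself be connected, since the finitely many ``long'' arcs $te_m$ may leave the ball and re-enter it. You should instead take the union of the arcs $te_n$ wholly contained in the ball with the $t$-components of $te_m \cap B_\varepsilon(t)$ for the remaining $m$ (these components are relatively open in $te_m$, so this union is still a neighborhood of $t$, and it is connected), or simply observe that $X$ is a finite union of connected sets of small diameter and so has property S. The total-disconnectedness argument via hereditary unicoherence is essentially the one the paper leaves implicit.
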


\begin{proof}
Assume that $O(X) \cap \C \big( E(X) \big) = \emptyset$.

Denote by $N(X)$ the set of points where $X$ is not locally connected.  By \cite[Theorem 5.12, p.76]{Nadler92}, $N(X)$ contains a \emph{continuum of convergence}, i.e.\ a non-degenerate continuum $K \subset N(X)$ for which there is a sequence $\{A_n\}_n$ of pairwise disjoint subcontinua of $X$ converging to $K$ such that, for each $n \in \N$, we have $K \cap A_n = \emptyset$.  Since $E(X)$ does not contain any non-degenerate continuum, neither does $E(X) \cup \{t\}$, and so $K \cap O(X) \neq \emptyset$.

Choose $x \in K \cap O(X)$.  We can assume $t \notin A_n$ for all $n$.  Then for each $n$ there is $e_n \in E(X)$ such that $A_n \subset te_n$.  We can assume (by taking a subsequence if needed) that the sequence $\{e_n\}_n$ converges to a point $e \in X$.  Because $X$ is smooth, $te_n \to te$.  Note that $x \in (te)$ and so $e \neq t$.  Also, $e \notin O(X)$ since $O(X) \cap \C \big( E(X) \big) = \emptyset$.  Therefore, $e \in E(X)$.  Since $E(X)$ is an orbit, this means that each point in $E(X)$ is a limit of a sequence of endpoints.

Note that by our initial assumption $\C \big( E(X) \big) \subset E(X) \cup \{t\}$.  Since $E(X)$ does not contain any non-degenerate continuum, neither does $E(X) \cup \{t\}$.  Hence, $\C \big( E(X) \big)$ is totally disconnected and perfect, and consequently it is a Cantor set.
\end{proof}

\label{abaCantor}
Let $C$ denote the standard middle-thirds Cantor set and $F_C$ the \emph{Cantor fan} $(C \times [0,1]) / (C \times \{1\})$.  It is known that a fan is smooth if and only if it is embeddable in $F_C$ (see \cite[Theorem 9, p.27]{Onfans}, \cite[Proposition 4, p.165]{Imacantorfan} and \cite[Corollary 4, p.90]{Eberhart}).

\label{F_CW}
We now construct the \emph{shrinking Cantor fan} $F_{C_\omega}$.  For each $i \in \N$ put
\[ F_i = \bigcup \left\{ tp_c: c \in C \cap \left[ \frac{(3)^{i-1}-1}{(3)^{i-1}}, \frac{(3)^i - 2}{(3)^i} \right] \textrm{ and } p_c = \left( c, \frac{1}{2^{i-1}} \right) \right\} .\]
Then $\{F_i\}_i$ is a sequence of fans homeomorphic to the Cantor fan tending to $\{t\}$ such that the intersection of any two of them is $\{t\}$.  Define
\[ F_{C_\omega} = \bigcup_{i \in \N} F_i .\]
Note that $F_{C_\omega}$ is a smooth fan, and $\cl{F_{C{\omega}}}{E(F_{C_\omega})} = E(F_{C_\omega}) \cup \{t\}$.

The following theorem gives us a characterization of $F_C$ and $F_{C_\omega}$.  We use this characterization to classify $\frac{1}{3}$-homogeneous smooth fans in Theorem \ref{aba13} and to prove that there is no $\frac{1}{4}$-homogeneous smooth fan in Theorem \ref{nosmooth1/4}.

\begin{thm}
\label{Cantor-ShrinkCantor}
Let $X$ be a smooth fan not locally connected such that $E(X)$ is an orbit of $X$.  Then:
\begin{enumerate}
  \item $X$ is homeomorphic to $F_C$ if and only if $E(X)$ is closed in $X$; and
  \item $X$ is homeomorphic to $F_{C_\omega}$ if and only if $\C \big( E(X) \big) = E(X) \cup \{t\}$.
\end{enumerate}
\end{thm}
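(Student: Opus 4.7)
The forward direction of each equivalence is immediate: in $F_C$ the endpoints form a closed Cantor set, and the paper already records $\cl{F_{C_\omega}}{E(F_{C_\omega})} = E(F_{C_\omega}) \cup \{t\}$. So I concentrate on the converse directions.

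The main ingredient for (1) is an auxiliary statement: \emph{if $Y$ is a smooth fan whose endpoint set $E(Y)$ is a Cantor set and is closed in $Y$, then $Y \cong F_C$}. To prove it I would use the fact (cited in the paper) that $Y$ embeds in $F_C$, and observe that such an embedding must send the top of $Y$ to the top of $F_C$, since only the top of $F_C$ has order greater than $2$. Writing each endpoint of $Y$ in the embedding as $e = (c_e, s_e)$, the assignment $e \mapsto c_e$ is injective: otherwise the unique $F_C$-arc from $t$ to one endpoint $e'$ would, by uniqueness of arcs in $Y$ and $Y \subset F_C$, coincide with the arc $te'$ in $Y$ and would contain the other endpoint in its interior, contradicting endpointness. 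Now parametrize each arc $te$ by $[0,1]$ linearly using the second coordinate in the embedding, and define $\Psi : E(Y) \times [0, 1] \to Y$ sending $(e, u)$ to the corresponding point of $te$; this map is continuous, collapses $E(Y) \times \{0\}$ to $t$, and descends to a continuous bijection from the cone $F_{E(Y)} = (E(Y) \times [0, 1])/(E(Y) \times \{0\})$ onto $Y$ that is a homeomorphism by compactness. Since $E(Y)$ is a Cantor set, $F_{E(Y)} \cong F_C$. Part (1) now follows: under its hypotheses $O(X) \cap \cl{X}{E(X)} = \emptyset$ because endpoints are not ordinary points, so Lemma \ref{lemanlc cantor} makes $E(X)$ a Cantor set and the auxiliary statement applies.

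For (2), set $K = E(X) \cup \{t\}$. Again $O(X) \cap \cl{X}{E(X)} = \emptyset$, so Lemma \ref{lemanlc cantor} gives that $K$ is a Cantor set. Since $t$ is a point of the Cantor set $K$, I decompose $K \setminus \{t\} = \bigsqcup_{n \in \N} D_n$ as a disjoint union of nonempty subsets clopen in $K$ (hence each a Cantor set), with $D_n \to \{t\}$ in the Hausdorff metric. Setting $F_n = \bigcup_{e \in D_n} te$, smoothness of $X$ together with compactness of $D_n$ shows that $F_n$ is a closed smooth subfan of $X$ with top $t$ and $E(F_n) = D_n$, and the auxiliary statement gives $F_n \cong F_C$. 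The $F_n$'s meet pairwise only in $\{t\}$ by the fan property of $X$, and $F_n \to \{t\}$ because $e \to t$ forces $te \to \{t\}$ by smoothness. Thus $X$ is presented in exactly the same form as the construction of $F_{C_\omega}$ in the excerpt, and I would conclude by matching the $F_n$'s of $X$ (sorted by decreasing diameter) with the constituent Cantor fans of $F_{C_\omega}$, choosing homeomorphisms of the matched pieces that fix $t$, and gluing them into a single homeomorphism $X \to F_{C_\omega}$.

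I anticipate two main obstacles: (i) verifying that $F_n$ is indeed a subfan with $E(F_n) = D_n$, the essential point being that by the fan property of $X$ every point of $F_n$ distinct from $t$ lies on a unique arc $te$ with $e \in D_n$, so no new endpoints or ramification points appear inside $F_n$; and (ii) proving continuity at $t$ of the glued map $X \to F_{C_\omega}$, which requires the matching to respect the shrinking of the pieces to $t$ on both sides --- this is precisely where the sort-by-diameter pairing plays its role.
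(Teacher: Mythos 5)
Your proposal is correct and follows essentially the same route as the paper's own proof: Lemma \ref{lemanlc cantor} supplies the Cantor set, an embedding of $X$ into $F_C$ gives part (1), and a decomposition of $\C\big(E(X)\big) - \{t\}$ into clopen pieces converging to $\{t\}$, each spanning a Cantor subfan, gives part (2). You simply fill in details the paper leaves as ``easy to see'' (the cone argument for (1) and the gluing for (2)), and you avoid invoking the stronger embedding result that places $E(X)$ inside $E(F_C)$, which the paper does cite.
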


\begin{proof}
In both 1) and 2) the left to right implication follows at once.  Now, for 1) assume that $E(X)$ is closed in $X$.  By Lemma \ref{lemanlc cantor}, $E(X)$ is a Cantor set.  We can suppose that $X \subset F_C$ and $E(X) \subset E(F_C) = C \times \{0\}$ (see \cite[Theorem 1 (2), p.74]{FansKelley}).  Now it is easy to see that $X$ is homeomorphic to $F_C$.

For 2) suppose that $\C \big( E(X) \big) = E(X) \cup \{t\}$.  As a consequence of Lemma \ref{lemanlc cantor}, $\C \big( E(X) \big)$ is a Cantor set.  So there is a sequence $\{E_i\}_i$ of disjoint, open and closed sets of $\C \big( E(X) \big)$ such that
\[ E(X) = \C \big( E(X) \big) - \{t\} = \bigcup_{i \in \N} E_i \hspace{0.3cm} \textrm{and} \hspace{0.3cm} \lim_{i \to \infty} E_i = \{t\} .\]

For each $i \in \N$, define $X_i = \bigcup_ {e \in E_i} te$.  Observe that for each $i \in \N$, the set $E(X_i) = E_i$ is a Cantor Set.  Hence $X_i$ is homeomorphic to the Cantor fan.

Given that $\{E_i\}_i$ converges to $\{t\}$ and $X$ is smooth, the sequence of fans $\{X_i\}_i$ converges to $\{t\}$.  Note that $X = \bigcup X_i$.  It is now straightforward to see that $X$ is homeomorphic to $F_{C_\omega}$.
\end{proof}

\section{$\frac{1}{3}$-homogeneous smooth fans}

Given $n \in \N - \{1,2\}$, the \emph{simple $n$-od} is the unique (up to homeomorphism) fan with exactly $n$ endpoints.  Simple $n$-ods are sometimes called \emph{finite fans}.  Clearly any simple $n$-od is $\frac{1}{3}$-homogeneous.

\label{fomegad}
We denote by $F_\omega$ the unique (up to homeomorphism) locally connected fan whose top has infinite order.  That is, $F_\omega$ is homeomorphic to $\bigcup_{i=1}^\infty ob_i$, constructed in $\m{R}^2$, where $o = (0,0)$, $b_i = \left( \frac{1}{i},\frac{1}{i^2} \right)$, and $ob_i$ is the straight line segment joining $o$ and $b_i$ for each $i \in \N$.  Clearly $F_\omega$ is $\frac{1}{3}$-homogeneous as well.

Because the Cantor set is homogeneous, it is straightforward to see that the Cantor fan $F_C$ and the shrinking Cantor fan $F_{C_\omega}$ are both $\frac{1}{3}$-homogeneous.

In 1961 A.\ Lelek constructed a fan as an example of a dendroid whose set of endpoints is dense and 1-dimensional (\cite[\textsection 9, p.314]{Lelek}).  Such a fan is called the \emph{Lelek fan}, and it is characterized as the only smooth fan whose set of endpoints is dense (see \cite[Corollary, p.33]{Lelekunique} and \cite[Theorem, p.529]{BulaOversteegen}).

In \cite{Aarts-Oversteegen93} Jan M.\ Aarts and Lex G.\ Oversteegen constructed the \emph{hairy arc} $H$.  This is a smooth dendroid constructed as the intersection of a sequence of subsets of $[0,1]^2$ containing the base $B = [0,1] \times \{0\}$ in such a way that the closure of each component of $H - B$ is an arc, called a \emph{hair}, these hairs are pairwise disjoint, the set of endpoints $E(H)$ is dense in $H$, and $E(H) \cup B$ is connected (\cite[Corollary 2.5, p.905]{Aarts-Oversteegen93}).  The following result is well known.

\begin{lem}
\label{H/B Lelek}
The Lelek fan is homeomorphic to $H/B$.
\end{lem}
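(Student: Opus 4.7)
The plan is to verify that $H/B$ is a smooth fan with dense set of endpoints, and then invoke the characterization (cited just above, via \cite{Lelekunique} and \cite{BulaOversteegen}) of the Lelek fan as the unique smooth fan with this property.

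First I would establish the fan structure. Let $q\colon H\to H/B$ be the quotient map. Since $B$ is a subcontinuum of $H$, the map $q$ is monotone, and monotone images of dendroids are dendroids; hence $H/B$ is a dendroid. Each hair of $H$, being an arc meeting $B$ in exactly one point, maps under $q$ to an arc from $[B]$ to a tip in $q(E(H))$, and distinct such image arcs meet only at $[B]$. Therefore $[B]$ is the unique ramification point, $H/B$ is a fan with top $[B]$, and $E(H/B)=q(E(H))$; since $E(H)$ is dense in $H$ and $q$ is a continuous surjection, $E(H/B)$ is dense in $H/B$.

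To verify smoothness, fix a point $p\in B$ with respect to which $H$ is smooth. Let $q(a_n)\to q(a)$ in $H/B$; I would show that $[B]q(a_n)\to[B]q(a)$ in the Hausdorff metric. If $q(a)\neq[B]$, then $a\notin B$, and because $q$ is a homeomorphism off $B$ the lifts satisfy $a_n\to a$ in $H$; smoothness of $H$ at $p$ gives $pa_n\to pa$ in $H$, and the continuous map $q$ sends this to $q(pa_n)\to q(pa)$, which is exactly $[B]q(a_n)\to[B]q(a)$, because the portion of $pa_n$ lying in $B$ collapses to $[B]$. If $q(a)=[B]$, then any subsequence of $\{a_n\}$ has a sub-subsequence converging to some $b\in B$; smoothness gives $pa_n\to pb\subseteq B$, so $q(pa_n)\to\{[B]\}$ along this sub-subsequence, and a standard subsequence argument yields $[B]q(a_n)\to\{[B]\}$ for the full sequence. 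Hence $H/B$ is a smooth fan with dense set of endpoints, and the characterization gives that $H/B$ is homeomorphic to the Lelek fan.

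I expect the smoothness verification at the collapsed top to be the main obstacle; it hinges on choosing the basepoint $p$ inside $B$ itself, so that arcs in $H$ from $p$ to points approaching $B$ are forced by the smoothness of $H$ to remain Hausdorff-close to $B$, and therefore collapse to a single point in $H/B$.
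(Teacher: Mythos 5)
Your proposal is correct and follows the same route as the paper: verify that $H/B$ is a smooth fan with dense set of endpoints and invoke the characterization of the Lelek fan. The only difference is that you verify smoothness of $H/B$ by hand (choosing the initial point in $B$ and splitting into the cases $q(a)\neq[B]$ and $q(a)=[B]$), whereas the paper simply cites the result that a monotone image of a smooth dendroid is a smooth dendroid; your direct argument is sound and self-contained but establishes nothing beyond what that citation provides.
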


\begin{proof}
Let $X = H/B$ and $q$ be the quotient function from $H$ to $X$.  Then $q$ identifies the base $B$ into a point $x_0 \in X$, and $q|_{H-B}$ is a homeomorphism from $H - B$ into $X - \{x_0\}$.  Hence $q$ is monotone and by \cite[Corollary 10, p.309]{SmoothDendroids}, $X$ is a smooth dendroid.  Since $R(H) \subset B$, we have $q(R(H)) = \{x_0\}$ and so $X$ is a smooth fan with top $x_0$.

We have $E(X)$ is dense in $X$, because $q|_{H-B}$ is a homeomorphism and $E(H)$ is dense in $H-B$.  Thus $X$ is homeomorphic to the Lelek fan.
\end{proof}

In \cite[Corollary 1.5, p.285]{Aarts-Oversteegen95} it is proved that there are exactly five homeomorphism types of points in $H$, namely
\begin{enumerate}
  \item endpoints of the base $B$,
  \item endpoints of hairs,
  \item hairless base points,
  \item base points with hair attached,
  \item interior points of hairs.
\end{enumerate}

It follows easily from this and Lemma \ref{H/B Lelek} that the Lelek fan is $\frac{1}{3}$-homogeneous.

We now prove that the fans discussed above comprise the complete list of $\frac{1}{3}$-homogeneous smooth fans.

\begin{thm}
\label{aba13}
A smooth fan is $\frac{1}{3}$-homogeneous if and only if it is homeomorphic to one of the following fans.
\begin{enumerate}
  \item A simple $n$-od, for some $n \in \N-\{1,2\}$,
  \item the dendrite $F_\omega$,
  \item the Cantor fan,
  \item the Lelek fan,
  \item the shrinking Cantor fan $F_{C_\omega}$.
\end{enumerate}
\end{thm}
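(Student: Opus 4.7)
The ``if'' direction has already been established in the discussion preceding the theorem statement, so I would focus on the ``only if'' direction. Let $X$ be a $\frac{1}{3}$-homogeneous smooth fan with top $t$. The starting observation is that the three sets $\{t\} = R(X)$, $O(X)$, and $E(X)$ are pairwise disjoint, nonempty, homeomorphism-invariant, and exhaust $X$; since $X$ has exactly three orbits, these sets must be precisely the three orbits. I would then split on whether or not $X$ is locally connected.

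In the locally connected case, $X$ is a dendrite with unique ramification point $t$, and I would argue by the order of $t$. If $\mathrm{Ord}(t)$ is finite, say equal to $n \geq 3$, then $X$ is the union of $n$ arcs joined only at $t$ and containing no other ramification points, hence a simple $n$-od (case 1). If $\mathrm{Ord}(t)$ is infinite, local connectedness forces the arcs at $t$ to have diameters tending to $0$, and the uniqueness statement in the definition of $F_\omega$ gives $X \cong F_\omega$ (case 2).

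In the non-locally-connected case, I would apply Lemma \ref{lemanlc cantor} to the orbit $E(X)$, which yields two alternatives. If $O(X) \cap \C(E(X)) \neq \emptyset$, choose $x$ in this intersection; for any $y \in O(X)$ there is $h \in \ho$ with $h(x) = y$, and since $h$ preserves $E(X)$ and commutes with closure, $y \in \C(E(X))$. Thus $O(X) \subseteq \C(E(X))$, and as $O(X)$ is dense in $X$, so is $E(X)$. The Bula--Oversteegen characterization of the Lelek fan then forces $X$ to be the Lelek fan (case 4). Otherwise $\C(E(X))$ is a Cantor set disjoint from $O(X)$, so $\C(E(X)) \subseteq E(X) \cup \{t\}$. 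If $E(X)$ is closed, Theorem \ref{Cantor-ShrinkCantor}(1) yields $X \cong F_C$ (case 3); if not, then $\C(E(X)) = E(X) \cup \{t\}$ and Theorem \ref{Cantor-ShrinkCantor}(2) yields $X \cong F_{C_\omega}$ (case 5).

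Most steps are short given the preparatory lemma and theorem. The part most likely to require care is the transfer from ``$O(X)$ meets $\C(E(X))$'' to ``$E(X)$ is dense in $X$'': it relies only on $O(X)$ being a single orbit together with the invariance of $E(X)$ under $\ho$, and does not require Effros' theorem. A secondary concern is ensuring the five output cases are mutually exclusive and exhaustive -- this follows because the locally/not-locally-connected split is disjoint, the simple $n$-od versus $F_\omega$ split is by order of $t$, and in the non-locally-connected branch the three sub-cases are distinguished by whether $E(X)$ is closed, whether $\C(E(X)) = E(X) \cup \{t\}$, or whether $E(X)$ is dense in $X$.
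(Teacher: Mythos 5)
Your proposal is correct and follows essentially the same route as the paper: both handle the locally connected case by inspection, and in the non-locally-connected case both use the orbit argument ($O(X)$ meeting $\cl{X}{E(X)}$ forces $E(X)$ dense, hence the Lelek fan) and otherwise reduce to Theorem \ref{Cantor-ShrinkCantor} according to whether $E(X)$ is closed. The only cosmetic difference is that you case on whether $O(X)$ meets $\cl{X}{E(X)}$ while the paper cases on whether $E(X)$ is closed, dense, or neither.
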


\begin{proof}
It has been observed above that each of the fans in this list is $\frac{1}{3}$-homogeneous.

Suppose that $X$ is a $\frac{1}{3}$-homogeneous fan.  It is not difficult to see that the only locally connected $\frac{1}{3}$-homogeneous fans are $F_\omega$ and simple $n$-ods.

Assume, then, that $X$ is not locally connected.  If $E(X)$ is closed, from Theorem \ref{Cantor-ShrinkCantor} part 1), we have that $X$ is homeomorphic to the Cantor fan.  If $E(X)$ is dense, $X$ is homeomorphic to the Lelek fan by the characterization mentioned above.  Finally, suppose that $E(X)$ is neither closed nor dense in $X$.  We want to prove that $\C \big( E(X) \big) = E(X) \cup \{t\}$.  Suppose that $\C \big( E(X) \big) \cap O(X) \neq \emptyset$.  This implies $O(X) \subset \C \big( E(X) \big)$, because $E(X)$ and $O(X)$ are orbits of $X$.  It follows that $E(X)$ is dense, which contradicts our assumption.  Consequently, $\C \big( E(X) \big) \subset E(X) \cup \{t\}$ and since $E(X)$ is not closed, the equality holds.  By Theorem \ref{Cantor-ShrinkCantor} part 2), $X$ is homeomorphic to $F_{C_\omega}$.
\end{proof}

\section{$\frac{1}{4}$-homogeneous smooth fans}

In this section we prove the following result.

\begin{thm}
\label{nosmooth1/4}
There is no $\frac{1}{4}$-homogeneous smooth fan.
\end{thm}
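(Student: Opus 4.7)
The plan is to argue by contradiction: suppose $X$ is a smooth fan with exactly four orbits under $\ho$. Since $\{t\}$, $E(X)$, and $O(X)$ are each invariant, the three non-top orbits distribute as either (A) $E(X)$ is a single orbit and $O(X) = O_1 \sqcup O_2$ splits into two, or (B) $O(X)$ is a single orbit and $E(X) = E_1 \sqcup E_2$. If $X$ is locally connected then $X$ is a simple $n$-od or $F_\omega$, both $\frac{1}{3}$-homogeneous, contradicting the hypothesis; so I assume $X$ is not locally connected.

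Case (B) will fall from the fan structure: every ordinary point $p$ lies in the interior of a unique arc $te_p$, so any homeomorphism sending $p$ to $p'$ must send $te_p$ to $te_{p'}$, and in particular $e_p$ to $e_{p'}$. Thus $O(X)$ being a single orbit forces $E(X)$ to be a single orbit, contradicting (B). For Case (A) I would apply Lemma \ref{lemanlc cantor}, Theorem \ref{Cantor-ShrinkCantor}, and the characterization of the Lelek fan to reduce to the situation $O_1 \subset \cl{X}{E(X)}$ and $O_2 \cap \cl{X}{E(X)} = \emptyset$: otherwise $X$ is one of $F_C$, $F_{C_\omega}$, or the Lelek fan, each already known to be $\frac{1}{3}$-homogeneous. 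In this residual configuration $O_2$ is open in $X$, since its complement $\cl{X}{E(X)} \cup \{t\}$ is closed.

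The main obstacle is ruling out this residual configuration. My plan is to examine the connected components of $O_2 \cap (te)$ on each arc $te$ with $e \in E(X)$: each such component is an open subarc with both endpoints in $\{t,e\} \cup O_1$, and since $\ho$ acts transitively on $O_2$ and on $E(X)$, all these components across all arcs share a common unordered pair of endpoint types, leaving four cases: $\{t,e\}$, $\{t,O_1\}$, $\{e,O_1\}$, $\{O_1,O_1\}$. The first case splits $E(X)$ into arcs with $(te) \subset O_2$ versus $(te) \subset O_1$, contradicting that $E(X)$ is one orbit. In each of the remaining three cases I would fix an $O_1$-endpoint $a$ of some $O_2$-component $C$ on an arc $te$, let $b$ denote the other endpoint of $C$, and note that the region of $te$ on the side of $a$ opposite to $b$ consists of $O_1$-points. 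Applying $(\dagger)$ to the $G_\delta$ orbit $O_1$ then yields, for small $\varepsilon$ and a chosen $v \in O_1$ on that opposite side arbitrarily close to $a$, an $\varepsilon$-homeomorphism $h$ with $h(a) = v$. Arc uniqueness forces $h(te) = te$ and $h(e) = e$, so $h(C)$ is an $O_2$-component on $te$; by the choice of $v$ and with $\varepsilon$ small enough that $h(b)$ remains on the same side of $a$ as $b$, the image $h(C)$ contains $a$, contradicting $a \in O_1$. The case $\{O_1, O_1\}$ is the most delicate, because multiple components can coexist on a single arc and $\varepsilon$ must be calibrated against both the length of $C$ and the length of the $O_1$-region immediately below $a$ on $te$. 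A preliminary verification that each of the four orbits is $G_\delta$, required for $(\dagger)$, is standard for a Polish group acting on a compact metric space with finitely many orbits.
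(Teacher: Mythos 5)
Your reduction to the residual configuration --- $E(X)$ a single orbit, $O(X) = O_1 \sqcup O_2$ with $O_1 \subset \C\big(E(X)\big)$ and $O_2 \cap \C\big(E(X)\big) = \emptyset$ --- matches the paper's (local connectedness, $E(X)$ closed, $\C\big(E(X)\big) = E(X)\cup\{t\}$, and $E(X)$ dense are all excluded by citing the known $\frac{1}{3}$-homogeneous fans, and your disposal of case (B) is the same observation the paper makes at the start of Lemma \ref{aba1/4}). The gap is in how you eliminate that residual configuration. Your entire argument there rests on applying $(\dagger)$ to the orbit $O_1$, and the justification offered --- that every orbit of a Polish group acting on a compact metric space with finitely many orbits is $G_\delta$ --- is false. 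The ordinary points of the Lelek fan form a single orbit of a three-orbit action, yet they are dense and meager (because $E$ is a dense $G_\delta$ there), hence not $G_\delta$. Worse, in your residual configuration $O_1$ itself is provably \emph{not} $G_\delta$: by Lemma \ref{aba1/4} every endpoint lies in $\C\big(O_1 \cap (te)\big)$, so $O_1$ is dense in the compact space $\C\big(E(X)\big)$, while $E(X)$ is a dense $G_\delta$ in $\C\big(E(X)\big)$ by Lelek's theorem, making $O_1$ meager there; a dense meager subset of a Baire space cannot be $G_\delta$. So $(\dagger)$ does not apply to $O_1$, and the $\varepsilon$-homeomorphism $h$ with $h(a)=v$ that drives your contradiction is not available.

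There is a secondary problem even granting that homeomorphism: Lemma \ref{aba1/4} forces both $O_1$ and $O_2$ to accumulate at $t$ and at $e$ along every $(te)$, so the only endpoint-type case that actually occurs is $\{O_1,O_1\}$ --- exactly the one you defer as ``delicate.'' In that case the point $v \in O_1$ on the far side of $a$ and arbitrarily close to $a$ need not exist, since $O_2$-components of $(te)$ may accumulate at $a$ from that side (or $a$ may separate two adjacent $O_2$-components). The paper sidesteps both difficulties by applying $(\dagger)$ only to orbits that are demonstrably $G_\delta$: the \emph{open} orbit $O_2 = O(X) - \C\big(E(X)\big)$ in Lemma \ref{T,O-T}, and $E(X)$ itself elsewhere. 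The trick in Lemma \ref{T,O-T} is to arrange points so that a small motion of a point of the open orbit $O_2$ forces the corresponding endpoint to move by more than $\varepsilon$; if you want to salvage your strategy, you should transfer your contradiction onto $O_2$ or $E(X)$ in a similar way rather than moving points of $O_1$.
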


We first establish two auxiliary results, which apply to both smooth and non-smooth fans.

\begin{lem}
\label{aba1/4}
Let $X$ be a $\frac{1}{4}$-homogeneous fan with top $t$.  Then $O(X) = O_1 \cup O_2$, where $O_1$ and $O_2$ are two orbits of $X$ and for each $e \in E(X)$, both $t$ and $e$ belong to $\C \big( O_1 \cap (te) \big)$ and to $\C \big( O_2 \cap (te) \big)$.
\end{lem}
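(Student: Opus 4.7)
The plan is to establish the claim in two stages: first that $E(X)$ is a single orbit, so the four orbits of $X$ are $\{t\}$, $E(X)$, and two ordinary orbits $O_1, O_2$; then that each $O_i$ accumulates at both $t$ and $e$ along every arc $te$.

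The first stage is a counting argument exploiting the fact that in a fan any two distinct arcs emanating from $t$ meet only at $t$. From this, whenever $h \in \ho$ carries an ordinary point $o \in (te)$ to an ordinary point $o' \in (te')$, the arc $h(te) = t\,h(e)$ shares the non-endpoint $o'$ with the arc $te'$, forcing $h(e) = e'$. Consequently, if $e$ and $e'$ lie in different orbits of $E(X)$, no point of $(te)$ is $\ho$-equivalent to any point of $(te')$. Writing $k$ for the number of orbits into which $E(X)$ splits, the $k$ sets of the form $\bigcup_{e \in E_\alpha}(te)$ then sit inside pairwise disjoint ordinary orbits, so $O(X)$ splits into at least $k$ orbits and $X$ has at least $1 + 2k$ orbits in total. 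Homogeneity degree $4$ forces $k = 1$, and $O(X)$ then contains exactly $4 - 2 = 2$ orbits, which we name $O_1$ and $O_2$.

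For the second stage, fix $e \in E(X)$ and study the action on the arc $te$ of the stabilizer $\{h \in \ho : h(e) = e\}$. The same arc-disjointness observation yields a stronger fact: any homeomorphism of $X$ sending one point of $(te)$ to another point of $(te)$ must fix $e$. Hence the stabilizer of $e$ acts transitively on each $A_i := O_i \cap (te)$, $i \in \{1, 2\}$, and its restriction to $te$ forms a group $G$ of orientation-preserving self-homeomorphisms of $te$ fixing $\{t, e\}$. Neither $A_i$ is empty: if $O_i \cap (te) = \emptyset$, then transferring by homeomorphisms carrying $e$ to any prescribed endpoint (available because $E(X)$ is one orbit) forces $O_i \cap (te') = \emptyset$ for all $e' \in E(X)$, contradicting $O_i \ne \emptyset$. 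The lemma thus reduces to a one-dimensional statement: if $G$ is any group of orientation-preserving homeomorphisms of $[0, 1]$ fixing $\{0, 1\}$, and $A_1 \sqcup A_2 = (0, 1)$ is a $G$-invariant partition into two non-empty sets on each of which $G$ acts transitively, then $0$ and $1$ are limit points of both $A_1$ and $A_2$.

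This one-dimensional reduction is the crux of the argument and the step I expect to require the most care. The fact itself has a short contradiction proof: if $b := \inf A_1 > 0$, then for every $g \in G$ one has $g(b) = \inf g(A_1) = \inf A_1 = b$ by monotonicity and continuity, so $b$ is a common fixed point of $G$. Since $b \in (0, 1) = A_1 \sqcup A_2$, transitivity of $G$ on whichever of $A_1, A_2$ contains $b$ forces that set to equal $\{b\}$; the complementary set then equals $(0, b) \cup (b, 1)$, but $G$ fixes $\{0, b, 1\}$ and preserves orientation, so it preserves each of $(0, b)$ and $(b, 1)$ separately, contradicting transitivity on a two-component set. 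A symmetric argument at $1$ yields $\sup A_i = 1$, and exchanging $A_1$ and $A_2$ finishes the proof. This delivers $t, e \in \cl{X}{O_i \cap (te)}$ for $i \in \{1, 2\}$, and since $e$ was arbitrary, the lemma follows.
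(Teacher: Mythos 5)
Your argument is correct and rests on the same two observations that drive the paper's proof: a homeomorphism carrying one point of $(te)$ to another point of $(te)$ must fix $e$ and hence restricts to an orientation-preserving self-homeomorphism of the arc $te$, and an extremal point of $O_i \cap (te)$ then contradicts transitivity. Your packaging of the second step as a general lemma on transitive group actions on $[0,1]$ (via the common fixed point $\inf A_1$) is a cleaner, more symmetric rendering of the paper's two-case contradiction at the point $x = \sup \big( O_1 \cap (te) \big)$, and your counting argument supplies the orbit decomposition that the paper dismisses as ``easy to see.''
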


\begin{proof}
It is easy to see that $E(X)$ must be one of the orbits of $X$ and $O(X)$ is the union of two orbits of $X$, $O(X) = O_1 \cup O_2$.  Let $e \in E(X)$.  Since $E(X)$ is an orbit of $X$, the two orbits $O_1$ and $O_2$ of ordinary points must both intersect $(te)$.

Suppose for a contradiction that $\C \big( O_1 \cap (te) )$ does not contain $e$.  Then there is a point $x \in (te)$ such that $O_1 \cap (xe) = \emptyset$, but for any $x' \in (tx)$ we have $O_1 \cap (x'x] \neq \emptyset$.  Let $y \in (xe)$, which implies $y \in O_2$.

If $x \in O_2$, then we can choose a homeomorphism $h: X \to X$ such that $h(y) = x$.  But then $h(x) \in (tx)$, and $h \big( (xe) \big) = \big( h(x)e \big) \subset O_2$, contradicting the choice of $x$.  Thus $x \in O_1$.  By a similar argument, we can see that $(tx] \not \subset O_1$.  But then if $z \in O_2 \cap (tx)$, we can choose a homeomorphism $h: X \to X$ such that $h(z) = y$, and then we have $h(x) \in O_1 \cap (ye) \subset O_1 \cap (xe) = \emptyset$, again a contradiction.  Therefore $e \in \C \big( O_1 \cap (te) )$.

The proofs that $t \in \C \big( O_1 \cap (te) \big)$ and $t,e \in \C \big( O_2 \cap (te) \big)$ are similar.
\end{proof}

\begin{lem}
\label{T,O-T}
If $X$ is a $\frac{1}{4}$-homogeneous fan, then either $O(X) \cap \C \big( E(X) \big) = \emptyset$ or $O(X) \subset \C \big( E(X) \big)$.
\end{lem}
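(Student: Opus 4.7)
The plan is to argue by contradiction. Using the $\ho$-invariance of $\C(E(X))$ together with the fact that $O_1$ and $O_2$ are orbits of $X$, we may assume without loss of generality that $O_1 \subset \C(E(X))$ while $O_2 \cap \C(E(X)) = \emptyset$, and aim to derive a contradiction.

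First I would show $t \in \C(E(X))$: if not, then $X - \C(E(X))$ would be an open neighborhood of $t$ disjoint from $O_1$, contradicting Lemma \ref{aba1/4}, which guarantees that every neighborhood of $t$ meets $O_1 \cap (te)$ for each $e \in E(X)$. Consequently $\C(E(X)) = E(X) \cup O_1 \cup \{t\}$, and $O_2 = X - \C(E(X))$ is open in $X$. Applying Lemma \ref{aba1/4} once more gives $E(X) \cup \{t\} \subset \C(O_2)$, so $\C(E(X)) \subset \C(O_2)$, and since $O_1 \subset \C(E(X))$ we conclude $\C(O_2) = X$; in other words, $O_2$ is both open and dense in $X$.

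Now fix an endpoint $e \in E(X)$ and consider the arc $(te)$. Both $(te) \cap O_1$ and $(te) \cap O_2$ are nonempty by Lemma \ref{aba1/4}, and the latter is open in $(te)$. Pick $y \in O_2$, and let $(a,b)$ be the component of $(te) \cap O_2$ containing $y$ (where $(te)$ is the unique arc through $y$); by Lemma \ref{aba1/4} neither endpoint $a$ nor $b$ can equal $t$ or $e$, since otherwise the accumulation of $O_1 \cap (te)$ at that end of the arc would be blocked by $(a,b) \subset O_2$. Hence $a, b \in O_1$, and since $a \in O_1 \subset \C(E(X))$ we obtain a sequence of endpoints $e_n \to a$.

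The technically most delicate step is to transfer this sequence of endpoints accumulating at $a$ to a sequence of endpoints accumulating at the point $y \in O_2$, which would contradict $O_2 \cap \C(E(X)) = \emptyset$. The plan is to invoke Effros's theorem $(\dagger)$ applied to the $G_\delta$-orbit $O_2$ (which is $G_\delta$ because it is open): using small self-homeomorphisms of $X$ to move between nearby points of $O_2$ within the interval $(a,b)$, and combining this with the $\ho$-invariance of the orbit partition, one aims to push the endpoint sequence across the interval toward $y$. Carrying out this transfer rigorously, in particular without relying on the smoothness of $X$, is where the main difficulty lies.
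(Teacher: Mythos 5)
Your setup is sound and parallels the paper's: under the contradiction hypothesis the invariance of $\C(E(X))$ forces $O_1 = O(X) \cap \C(E(X))$ and $O_2 = O(X) - \C(E(X))$ to be the two orbits of ordinary points, $O_2$ is open, and by Lemma \ref{aba1/4} each component $(a,b)$ of $O_2 \cap (te)$ has its boundary points $a,b$ in $O_1 \subset \C(E(X))$. But the step you defer as ``technically delicate'' is not merely delicate --- as proposed, it cannot be carried out. Every $h \in \ho$ preserves $\C(E(X))$, so the image of a sequence of endpoints accumulating at $a$ is a sequence of endpoints accumulating at $h(a) \in \C(E(X))$, never at a point of $O_2$. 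Worse, any homeomorphism carrying one point of $(a,b)$ to another point of $(a,b)$ must map the leg $te$ onto itself (legs go to legs, and the image point lies on $te$), hence it fixes $e$ and maps the component $(a,b)$ of $O_2 \cap (te)$ onto itself, fixing $a$ and $b$; chaining small homeomorphisms inside $(a,b)$ therefore moves nothing toward $y$. No argument whose conclusion is ``endpoints accumulate at a point of $O_2$'' can go through homeomorphisms of $X$; the contradiction must be extracted quantitatively.

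The mechanism the paper uses, and which your proposal is missing, is this. Take $x \in O_1$ with endpoints $e_n \to x$, let $e$ be the endpoint with $x \in te$, and take $y \in (tx) \cap O_2$ (Lemma \ref{aba1/4}). Every subsequential limit of the arcs $te_n$ is a subcontinuum containing $t$ and $x$, hence contains $tx$, so one can choose $y_n \in te_n$ with $y_n \to y$, and $y_n \in O_2$ eventually since $O_2$ is open. Put $\varepsilon = \frac{1}{2} d(x,e)$ and apply Effros ($\dagger$) to the open (hence $G_\delta$) orbit $O_2$ at $y$: for $n$ large there is an $\varepsilon$-homeomorphism $h$ with $h(y) = y_n$. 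Since $h$ permutes legs and sends the leg $te$ containing $y$ to the leg $te_n$ containing $y_n$, necessarily $h(e) = e_n$; but choosing $n$ with $d(x,e_n) < \varepsilon$ gives $d(e,e_n) \geq d(e,x) - d(x,e_n) > \varepsilon$, contradicting that $h$ moves every point less than $\varepsilon$. The essential idea is that a small displacement of $y$ onto a \emph{different} leg forces a large displacement of that leg's endpoint; your plan, which keeps everything on the single leg $te$, has no access to this.
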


\begin{proof}
Suppose for a contradiction that $X$ is a $\frac{1}{4}$-homogeneous fan such that $O_1 = O(X) \cap \C \big( E(X) \big)$ and $O_2 = O(X) - \C \big( E(X) \big)$ are both nonempty.  This means that these sets $O_1$ and $O_2$ must be orbits of $X$.

Take a point $x \in O_1$ and a sequence $\{e_n\}_n$ of points in $E(X)$ converging to $x$.  Denote by $e$ the endpoint such that $x \in te$.  From Lemma \ref{aba1/4}, we can choose $y \in (tx) \cap O_2$. Note that $tx \subset \lim_{n \to \infty} te_n$.  Hence, for each $n$ we can choose $y_n \in te_n$ such that $y_n \to y$.  Since $O_2 = X - \big( \{t\} \cup \C \big( E(X) \big) \big)$ is open in $X$, we can assume that $y_n \in O_2$ for all $n$.

Put $\varepsilon = \frac{1}{2} d(x,e)$.  Since $O_2$ is an open orbit, by Effros' Theorem ($\dagger$) there exists $\delta > 0$ such that, if $v \in O_2$ with $d(x,v) < \delta$, then there is an $\varepsilon$-homeomorphism of $X$ onto itself sending $y$ to $v$.  Choose $n \in \N$ such that $d(y,y_n) < \delta$ and $d(x,e_n) < \varepsilon$.  Then there is an $\varepsilon$-homeomorphism $h: X \to X$ such that $h(y) = y_n$.  Observe that $h(e) = e_n$.  But $d(e,e_n) \geq d(e,x) - d(x,e_n) > 2\varepsilon - \varepsilon = \varepsilon$, which is a contradiction since $h$ is a $\varepsilon$-homeomorphism.
\end{proof}

\begin{proof}[Proof of Theorem \ref{nosmooth1/4}]
Suppose for a contradiction that $X$ is a $\frac{1}{4}$-homogeneous smooth fan with top $t$.  It is easy to see that $X$ cannot be locally connected, as the only locally connected fans are simple $n$-ods and $F_\omega$.  Since $E(X)$ is an orbit of $X$ and $F_C$ and $F_{C_\omega}$ are $\frac{1}{3}$-homogeneous, by Theorem \ref{Cantor-ShrinkCantor}, it follows that $\C \big( E(X) \big)$ is not contained in $E(X) \cup \{t\}$.  Note that $E(X)$ is not dense, because in that case $X$ would be the Lelek fan which is also $\frac{1}{3}$-homogeneous.  It follows that $O(X) \cap \C \big( E(X) \big) \neq \emptyset$ and $O(X) \not \subset \C \big( E(X) \big)$, which contradicts Lemma \ref{T,O-T}.
\end{proof}

\section{Non-smooth fans}

For the remainder of this section, let $X$ be a fan with top $t$.

Given $a \in X$, we say that $X$ is \emph{smooth at $t$ with respect to} $a$ if for every sequence $\{a_n\}_n$ converging to $a$, we have $ta_n \to ta$.  Define the set
\[ S(X) = \{a \in X: X \textrm{ is smooth at } t \textrm{ with respect to } a \} .\]
From the definition, it is clear that $X$ is smooth if and only if $S(X) = X$, and $S(X)$ is invariant under homeomorphisms.  In \cite[Corollary 10, p.124]{SmoothandKelley} it is proved that $S(X)$ is a dense $G_\delta$-subset of $X$.

\begin{lem}
\label{Etsmooth}
If for each $x \in X - \{t\}$ we have $(tx) \cap S(X) \neq \emptyset$, then $E(X) \subset S(X)$.
\end{lem}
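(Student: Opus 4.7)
My plan is to argue by contradiction on a single endpoint. Fix $e \in E(X)$ and a sequence $\{a_n\}_n$ in $X$ with $a_n \to e$; the goal is $ta_n \to te$ in the Hausdorff metric. By compactness in the Hausdorff metric it suffices to show that every subsequential Hausdorff limit $K$ of $\{ta_n\}_n$ equals $te$. Since $t \in ta_n$ for every $n$ and $a_n \to e$, any such $K$ is a subcontinuum of $X$ containing $t$ and $e$, hence $te \subset K$.

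Suppose for contradiction $K \neq te$, and pick $y \in K \setminus te$ with $y \neq t$. Because $X$ is a fan, $y$ lies on a unique branch $t\alpha$ with $\alpha \in E(X)$ and $\alpha \neq e$, and $ty \subset K$. The hypothesis applied at $y$ produces $s \in (ty) \cap S(X)$; note $s \in (t\alpha)$, so $s \neq t$ and $s \notin te$. Since $s \in K$, pick $s_n \in ta_n$ with $s_n \to s$; smoothness at $s$ gives $ts_n \to ts$. Decomposing $ta_n = ts_n \cup s_n a_n$ and passing to a further subsequence so that $s_n a_n \to L$ in the Hausdorff metric, we obtain $K = ts \cup L$. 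Because $L$ contains both $s$ and $e$ and $X$ is hereditarily unicoherent, $L$ contains the unique arc $se = ts \cup te$; hence $ts \subset L$ and $K = L$.

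The proof will be completed by establishing $L = ts \cup te$. Once this is shown, $y \in K = ts \cup te$ combined with $y \notin te$ forces $y \in ts$, which contradicts the fact that $s \in (ty)$ places $y$ strictly beyond $s$ on the branch $t\alpha$ while $ts$ is the portion of $t\alpha$ between $t$ and $s$.

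The main obstacle is the inclusion $L \subset ts \cup te$. For large $n$ the branch $te_n''$ of $X$ containing $a_n$ must differ from $t\alpha$ (otherwise $a_n \in t\alpha$ combined with $a_n \to e$ forces the impossible $e \in t\alpha$), so each $s_n a_n$ is a sub-arc of $te_n''$ that does not contain $t$. I plan to combine this single-branch structure with the order-one condition $\Ord(e) = 1$ and a further application of the hypothesis: given any hypothetical $z \in L \setminus (ts \cup te)$ with $z \neq t$, such $z$ lies on some branch $t\beta$ with $\beta \neq e$; picking $s' \in (tz) \cap S(X)$ and iterating the decomposition-and-limit argument yields $ts' \subset L$, and combining this iteration with the order-one property at $e$ (to trap the tails of $s_n a_n$ near $a_n$ inside $te$) should close the gap. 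I expect this last step — turning the single-branch structure of $s_n a_n$ in $X$ together with the controlled convergence at $s$ into the containment $L \subset ts \cup te$ — to be the technical core of the proof.
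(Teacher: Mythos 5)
Your reduction is sound as far as it goes: every subsequential Hausdorff limit $K$ of $\{ta_n\}_n$ contains $te$, and the decomposition $K = ts \cup L$ with $ts \cup te \subset L = K$ is correct. But the step you yourself flag as ``the technical core'' --- proving $L \subset ts \cup te$ --- is essentially the whole content of the lemma, and the plan you sketch does not close it. The condition $\Ord(e)=1$ gives no control over Hausdorff limits of the arcs $s_na_n$: in a non-smooth fan the portions of these arcs near $a_n$ can perfectly well accumulate on points far from $te$, and ``iterating the decomposition'' only produces more initial segments $ts'$ that are \emph{contained in} $L$; it never bounds $L$ from above. (Your iteration does yield a contradiction when the new smooth point $s'$ lies on a branch different from that of $s$, since then $ts$ and $ts'$ cannot be nested; but it says nothing when all the excess of $L$ sits on the single branch through $s$, which is precisely the case you must exclude.) So as written the argument is circular at its crucial point: you have traded ``$K = te$'' for ``$K = ts\cup te$'', which is not easier.

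The missing idea --- and the way the paper finishes --- is to apply the hypothesis at $e$ itself, not only at points off $te$, and then compare two smooth reference points along the arcs $ta_n$. Keep your $s \in (ty)\cap S(X)$ with $s \notin te$, and also choose $b \in (te)\cap S(X)$, which exists by the hypothesis applied to $x=e$. Both $s$ and $b$ lie in $K$, so pick $s_n, b_n \in ta_n$ with $s_n \to s$ and $b_n \to b$; smoothness at $s$ and at $b$ gives $ts_n \to ts$ and $tb_n \to tb$. Since $b$ and $s$ lie on different branches, $b \notin ts$, so after passing to a subsequence $b_n \notin ts_n$; as $s_n$ and $b_n$ are two points of the single arc $ta_n$ emanating from $t$, their initial segments are nested, and $b_n \notin ts_n$ forces $ts_n \subset tb_n$. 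In the limit $ts \subset tb \subset te$, contradicting $s \notin te$. This two-point comparison makes any identification of $L$ unnecessary and completes the proof in a few lines.
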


\begin{proof}
Suppose contrary to the claim that there is a point $e \in E(X) - S(X)$.  Then there is a sequence $\{e_n\}_n$ of points converging to $e$ such that the arcs $te_n$ converge to a continuum $Y \neq te$.  Observe that $Y \supsetneq te$.  Take $a \in Y - te$ and $b \in (te)$.  From our hypothesis we can assume that $a$ and $b$ belong to $S(X)$.  Since $a,b \in Y$, for each $n$ there are $a_n,b_n \in te_n$ such that $a_n \to a$ and $b_n \to b$.  Note that $ta_n \to ta$ and $tb_n \to tb$.  Since $b \notin ta$, we can suppose that $b_n \notin ta_n$ for all $n$.  This means that $ta_n \subset tb_n$ for each $n$.  Hence $ta \subset tb$.  This contradicts the choice of $a$ and $b$.
\end{proof}

In particular, Lemma \ref{Etsmooth} implies that if $O(X) \subset S(X)$ then $E(X) \subset S(X)$ as well.  It is easy to prove in this case that $t \in S(X)$ too.  Hence, we obtain the following result:

\begin{coro}
\label{Xsmooth}
If $X$ is a fan with $O(X) \subset S(X)$, then $X$ is smooth.
\end{coro}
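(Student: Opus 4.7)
The plan is to first apply Lemma \ref{Etsmooth} to strengthen the hypothesis $O(X) \subset S(X)$ to $X - \{t\} \subset S(X)$, and then to argue by contradiction that $t \in S(X)$ as well, so that $S(X) = X$ and $X$ is smooth. For the first step, I observe that for any $x \in X - \{t\}$ the open arc $(tx)$ is nonempty and consists entirely of ordinary points, since $t$ is the only ramification point of the fan $X$. Thus $(tx) \subset O(X) \subset S(X)$, so the hypothesis of Lemma \ref{Etsmooth} is satisfied and $E(X) \subset S(X)$. Combined with $O(X) \subset S(X)$, this gives $X - \{t\} \subset S(X)$.

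To handle the top, I assume for contradiction that $t \notin S(X)$. Then there is a sequence $\{a_n\}_n$ in $X$ with $a_n \to t$ and $ta_n \not\to \{t\}$. Passing to a subsequence, I may assume $ta_n$ converges in the Hausdorff metric to some continuum $Y \neq \{t\}$. Choose any $q \in Y - \{t\}$ and points $q_n \in ta_n$ with $q_n \to q$. Since $q \in X - \{t\} \subset S(X)$, we have $tq_n \to tq$, and in particular $tq \subset Y$. Writing $ta_n = tq_n \cup q_n a_n$ and passing to a further subsequence, the tails $q_n a_n$ converge to a continuum $W$; since $W$ contains both $q$ and $t$ (as limits of $q_n$ and $a_n$), $W \supset tq$.

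The crucial step is to choose $r$ in the interior $(tq)$ of $tq$. Since $r \in tq \subset W$, I pick $r_n \in q_n a_n$ with $r_n \to r$. For large $n$ we have $r_n \neq q_n$, and the inclusion $r_n \in q_n a_n$ forces $r_n$ to lie strictly past $q_n$ on the arc $ta_n$; equivalently, $q_n \in tr_n$. Because $r \in X - \{t\} \subset S(X)$, the arcs $tr_n$ converge to $tr$, and passing to the limit in $q_n \in tr_n$ yields $q \in tr$. But $r \in (tq)$ means $tr \subsetneq tq$ and $q \notin tr$, a contradiction. Hence $t \in S(X)$, and $X$ is smooth.

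The main obstacle I anticipate is that, since $X$ is not \emph{a priori} smooth at $t$, I cannot invoke any continuity of the branch structure near $t$: the endpoints of the branches $te_n$ containing $a_n$ may accumulate anywhere, including at $t$ itself, so a direct parametrization argument along a limiting branch is not available. The decomposition $ta_n = tq_n \cup q_n a_n$ sidesteps this issue by translating Hausdorff convergence of the tail pieces $q_n a_n$ into concrete order information along each arc $ta_n$; the order relation $q_n \in tr_n$ is precisely what survives the limit and delivers the contradiction.
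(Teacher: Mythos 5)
Your proof is correct and follows essentially the same route as the paper: apply Lemma \ref{Etsmooth} (noting $(tx) \subset O(X) \subset S(X)$) to get $E(X) \subset S(X)$, and then verify $t \in S(X)$. The paper dismisses the latter step as ``easy to prove''; your limit-of-tails argument with the order relation $q_n \in tr_n$ is a correct and complete way to fill in that detail.
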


A space $X$ is \emph{colocally connected} at a point $p$ in $X$, provided that each neighborhood of $p$ contains a neighborhood $V$ of $p$ such that $X - V$ is connected.

\begin{thm}
\label{13-homo nonsmooth}
If $X$ is a $\frac{1}{3}$-homogeneous non-smooth fan, then:
\begin{enumerate}
  \item $X$ is colocally connected at each point of $E(X)$;
  \item $E(X)$ is dense in $X$; and
  \item $S(X) = E(X) \cup \{t\}$.
\end{enumerate}
\end{thm}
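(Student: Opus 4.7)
My plan is to address the three assertions in the order (3), (2), (1), since each step relies on the previous.

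For (3), observe that $S(X)$ is a dense $G_\delta$ subset of $X$ and is invariant under $\ho$; hence it is a union of orbits. Since $X$ is non-smooth, $S(X) \neq X$. Corollary \ref{Xsmooth} rules out $O(X) \subset S(X)$, so by orbit-invariance $O(X) \cap S(X) = \emptyset$, giving $S(X) \subset E(X) \cup \{t\}$. Density of $S(X)$ combined with the fact that $X$ has more than one point forces $S(X) \cap E(X) \neq \emptyset$, and therefore $E(X) \subset S(X)$. The remaining inclusion $t \in S(X)$ comes from a direct argument: given $a_n \to t$, choose endpoints $e_n$ with $a_n \in te_n$, extract a subsequence $e_n \to e \in X$, and use smoothness at $e$ (in the case $e \in E(X)$) to conclude $ta_n \to \{t\}$ via the convergence $te_n \to te$ and a parameterization argument; the residual cases where $e \in O(X)$ or $e = t$ collapse because the arcs $ta_n \subset te_n$ must shrink. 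Assertion (2) then follows immediately: $S(X) = E(X) \cup \{t\}$ is dense in $X$, and since $\{t\}$ alone cannot supply density, $E(X)$ itself must be dense.

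The proof of (1) is the main technical content. Fix $e \in E(X)$ and an open neighborhood $U$ of $e$. Since $e \in S(X)$ by (3) and $E(X) = S(X) \setminus \{t\}$ is $G_\delta$, I can apply Effros's theorem $(\dagger)$ to $\Orb(e) = E(X)$: for small $\varepsilon > 0$ with $B(e, 3\varepsilon) \subset U$, there is $\delta \in (0,\varepsilon)$ such that every $e' \in B(e,\delta) \cap E(X)$ equals $h_{e'}(e)$ for some $\varepsilon$-homeomorphism $h_{e'}$ of $X$. Pick $x \in (te)$ with $d(x,e)$ slightly larger than $\varepsilon$, and set $x_{e'} := h_{e'}(x)$. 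The translates $h_{e'}((xe]) = (x_{e'}e']$ are ``tips'' of branches near $e$, each of diameter $< 3\varepsilon$ and contained in $U$. The candidate set $V = \{e\} \cup \bigcup_{e' \in B(e,\delta) \cap E(X)} (x_{e'}e']$ (with a suitable thickening) has connected complement: $X - V$ contains $t$, the basal arcs $tx_{e'}$ of nearby branches, and the full arcs $te'$ of distant branches, all joined through $t$.

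The main obstacle is ensuring that $V$ is genuinely open in $X$: the naive tip-union need not be open, because a branch whose endpoint lies far from $e$ may still pass through $B(e,\varepsilon)$, placing points of $X$ near a listed tip that themselves belong to no tip of the family. I expect to overcome this by defining $V$ as $X \setminus K$ for a carefully assembled closed connected subset $K \subset X \setminus \{e\}$ containing $X \setminus U$, exploiting both the smoothness of $X$ at $e$ (to control how stray branches approach $te$) and the Effros construction (to produce enough coherent tip-data around $e$). Verifying that $K$ is connected (equivalently, that $X - V$ is connected) is easier, since every piece routes through $t$; the delicate step is to ensure $K$ is closed and omits $e$ while swallowing all of $X \setminus U$.
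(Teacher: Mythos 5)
Your derivation of $E(X)\subset S(X)\subset E(X)\cup\{t\}$ and of the density of $E(X)$ matches the paper's argument (Corollary~\ref{Xsmooth} plus orbit-invariance and density of $S(X)$), but the two remaining pieces of your proof have genuine gaps. First, the ``direct argument'' for $t\in S(X)$ does not work. From $e_n\to e\in S(X)$ you get $te_n\to te$ in the Hausdorff metric, but this gives no control over the \emph{order structure} along the arcs: for $a_n\in te_n$ with $a_n\to t$, the subarcs $ta_n$ can converge to any subarc $tc$ of $te$ with $c\neq t$ (the arc $te_n$ may travel far from $t$ before returning near $t$ to pass through $a_n$). Your closing remark that ``the arcs $ta_n\subset te_n$ must shrink'' is exactly the assertion $t\in S(X)$ that you are trying to prove, so the argument is circular. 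The paper instead proves $t\in S(X)$ by contradiction: a failure of smoothness at $t$ produces an ordinary point $b$ approached by points $b_n\in(tt_n)$ with $t_n\to t$; since $O(X)$ is a single orbit this propagates to every ordinary point, hence by density to every endpoint, and this contradicts colocal connectedness at endpoints --- i.e., statement (1) is an essential ingredient in proving $t\in S(X)$, which is why the paper establishes (1) first rather than last.

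Second, your proof of (1) is not a proof but a plan with an acknowledged hole. You correctly identify the obstruction (the union of Effros-translated ``tips'' need not be open, because branches with distant endpoints can pass through any neighborhood of $e$), and then write that you ``expect to overcome'' it by assembling a suitable closed connected set $K$; no such construction is given, and it is far from routine --- controlling which stray branches enter a neighborhood of $e$ and whether their portions beyond that neighborhood remain connected to $t$ in the complement is precisely the hard part. The paper does not attempt this at all: it quotes a nontrivial external result (\cite[Theorem 3.5 and Theorem 4.1]{DendroidsK-Minc}) asserting that the set of endpoints at which a fan is colocally connected is nonempty, and then concludes (1) immediately from the fact that $E(X)$ is a single orbit. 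You should either complete your construction in full detail or, more realistically, invoke that theorem; as written, both (1) and the $t\in S(X)$ portion of (3) are unproven, and since the correct proof of $t\in S(X)$ relies on (1), your chosen order (3), (2), (1) cannot be salvaged without reorganization.
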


\begin{proof}
Let $X$ be a $\frac{1}{3}$-homogeneous non-smooth fan.  The three orbits of $X$ are $\{t\}$, $O(X)$, and $E(X)$.  According to \cite[Theorem 3.5, p.235 and Theorem 4.1, p.237]{DendroidsK-Minc}, the set of endpoints at which $X$ is colocally connected is non-empty.  Because $E(X)$ is an orbit, we obtain 1).

From Corollary \ref{Xsmooth} and the fact that $O(X)$ is an orbit, we have that $O(X) \cap S(X) = \emptyset$.  Thus $S(X) \subset E(X) \cup \{t\}$.  Since $S(X)$ is dense, it intersects $E(X)$.  Hence,
\[ E(X) \subset S(X) \subset E(X) \cup \{t\} .\]
It follows that $E(X)$ is also dense.

Suppose for a contradiction that $t \notin S(X)$.  Then there is a sequence $\{t_n\}_n$ converging to $t$ such that the arcs $\{tt_n\}_n$ do not converge to $\{t\}$.  In this case we may assume, by taking a subsequence, that there exists a point $b \in O(X)$ and a sequence $\{b_n\}_n$ such that $b_n \in (tt_n)$ and $b_n \to b$.  Since $O(X)$ is an orbit, it follows that the same is true for every ordinary point; that is, for every $c \in O(X)$, there exist sequences $\{s_n\}_n$ and $\{c_n\}_n$ such that $c_n \in (ts_n)$, $s_n \to t$, and $c_n \to c$.  Since $O(X)$ is dense in $X$, we have the same property for endpoints $c \in E(X)$.  It is straightforward to see that this contradicts the fact that $X$ is colocally connected at each of its endpoints.  Therefore $t \in S(X)$.  This completes the proof of 3).
\end{proof}

\begin{thm}
\label{14-homo nonsmooth}
If $X$ is a $\frac{1}{4}$-homogeneous non-smooth fan, then:
\begin{enumerate}
  \item $X$ is colocally connected at each point of $E(X)$;
  \item $E(X)$ is dense in $X$; and
  \item $S(X) \supset E(X) \cup \{t\}$.
\end{enumerate}
\end{thm}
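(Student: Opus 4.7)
The plan is to follow the pattern of Theorem \ref{13-homo nonsmooth}, decomposing $O(X) = O_1 \cup O_2$ via Lemma \ref{aba1/4}, and proving the three conclusions in the order (1), (3), (2), since part (2) will rely on part (3). Part (1) is immediate from the $\frac{1}{3}$-case argument: the results \cite[Theorems 3.5, 4.1]{DendroidsK-Minc} provide at least one endpoint at which $X$ is colocally connected, and since $E(X)$ is an orbit, the property transfers to every endpoint by homeomorphism invariance.

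For part (3), I first show $E(X) \subset S(X)$. If not, then because $S(X)$ is invariant and $E(X)$ an orbit, $E(X) \cap S(X) = \emptyset$, so $S(X) \subset O_1 \cup O_2 \cup \{t\}$. A three-way case analysis on how $S(X)$ meets the orbits $O_1, O_2$ then yields contradictions in every subcase: both $O_i \subset S(X)$ makes $O(X) \subset S(X)$ and so $X$ smooth by Corollary \ref{Xsmooth}; neither contained gives $S(X) \subset \{t\}$, contradicting density of $S(X)$; and if exactly one $O_i \subset S(X)$, Lemma \ref{aba1/4} shows $(tx) \cap O_i \neq \emptyset$ for every $x \in X - \{t\}$, whence Lemma \ref{Etsmooth} forces $E(X) \subset S(X)$, against our assumption. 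To show $t \in S(X)$, assume $t \notin S(X)$ and extract a subsequence with $s_n \to t$ and $b_n \in (ts_n)$ converging to some $b \neq t$. By orbit invariance this property holds on the entire orbit of $b$; using that every endpoint lies in $\cl{X}{O_i}$ for $i=1,2$ (Lemma \ref{aba1/4}), a diagonal argument produces, for every $e \in E(X)$, sequences $s_n \to t$ and $c_n \to e$ with $c_n \in (ts_n)$. Fix a small open neighborhood $V$ of $e$ with $X - V$ connected (by part (1)) and $t \notin V$; for large $n$, $c_n \in V$ and $s_n \notin V$. Since subcontinua of dendroids are arcwise connected, connectedness of $X - V$ would give an arc in $X - V$ from $s_n$ to $t$; but the unique such arc, $ts_n$, lies along a single arm and passes through $c_n \in V$, a contradiction.

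For part (2), apply Lemma \ref{T,O-T}. If $O(X) \subset \C\bigl(E(X)\bigr)$, then $\C\bigl(E(X)\bigr) \supset X - \{t\}$; since $X$ is connected and $\C\bigl(E(X)\bigr)$ is closed, $\C\bigl(E(X)\bigr) = X$, so $E(X)$ is dense. In the opposite case $O(X) \cap \C\bigl(E(X)\bigr) = \emptyset$, I derive a contradiction. Since $X$ is non-smooth there is some $a \in X - S(X)$; by part (3) $a \in O(X)$, so $a \in (te_a)$ for some $e_a \in E(X)$. Pick $a_n \to a$ with $ta_n \to Y \supsetneq ta$ along a subsequence, write $a_n \in (te_n)$, and pass to a further subsequence so $e_n \to e^*$. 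Then $e^* \in \C\bigl(E(X)\bigr) \subset E(X) \cup \{t\} \subset S(X)$ by part (3), so $te_n \to te^*$; since $a \in Y$ is a limit of the $a_n$'s and $a_n \in te_n$, we have $a \in te^*$, forcing $e^* = e_a$. A standard arc-parametrization argument, choosing uniform parametrizations of $te_n$ converging to one of $te_a$, then yields $ta_n \to ta$, contradicting $Y \supsetneq ta$. The main obstacle is the case analysis in part (3), where the presence of two ordinary orbits forces one to supplement the single application of Corollary \ref{Xsmooth} from the $\frac{1}{3}$-proof with Lemmas \ref{aba1/4} and \ref{Etsmooth} in order to rule out the mixed configuration where one $O_i$ is smooth at $t$ and the other is not.
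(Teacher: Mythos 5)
Parts (1) and (3) of your proposal are essentially correct and follow the paper's route: part (1) is the same orbit argument, your case analysis for $E(X) \subset S(X)$ is a reorganization of the paper's two cases ($S(X) \cap O(X)$ empty or not), with the ``exactly one $O_i \subset S(X)$'' subcase handled, as in the paper, by combining Lemma \ref{aba1/4} with Lemma \ref{Etsmooth}; and your colocal-connectedness contradiction for $t \in S(X)$ is a reasonable expansion of what the paper leaves as ``straightforward.''

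The gap is in part (2), in the case $O(X) \cap \C(E(X)) = \emptyset$. You take $a \notin S(X)$, $a_n \to a$ with $ta_n \to Y \supsetneq ta$, write $a_n \in (te_n)$, obtain $e_n \to e_a \in S(X)$ and hence $te_n \to te_a$, and then assert that a ``standard arc-parametrization argument'' yields $ta_n \to ta$. That implication is false: Hausdorff convergence $te_n \to te_a$ together with $a_n \in te_n$ and $a_n \to a \in te_a$ does not force $ta_n \to ta$, because the subarc of $te_n$ from $t$ to $a_n$ may overshoot past $a$ and double back before reaching $a_n$, so that $ta_n \to ta'$ for some $a' \in (a\,e_a]$; no choice of injective parametrizations of the $te_n$ can converge uniformly to a parametrization of $te_a$ in that situation, since the limit map would traverse part of $te_a$ twice. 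The failure of exactly this implication is what $a \notin S(X)$ \emph{means}, and the configuration you declare contradictory is precisely the one the paper exhibits in its claim ($\star$): for $x \in O_2$ one gets $tx_n \to tx'$ with $x \in (tx')$, even though $t$ and the endpoint of that leg both lie in $S(X)$. The paper closes this case by a genuinely different mechanism: it shows $O_1 = O(X) \cap S(X)$ is dense, proves ($\star$) so that $d\big(x, O_1 \cap te\big) > 0$ for $x \in O_2$, and then applies Effros' theorem ($\dagger$) to the $G_\delta$ orbit $E(X)$ to produce an $\varepsilon$-homeomorphism carrying $e_n$ to $e$ (hence $te_n$ onto $te$) and contradict that separation. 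Your proposal is missing this entire argument, so part (2) is not established.
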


\begin{proof}
\setcounter{case}{0}
Let $X$ be a $\frac{1}{4}$-homogeneous non-smooth fan.  Since $E(X)$ is an orbit of $X$, we obtain 1) in exactly the same way as in the proof of Theorem \ref{13-homo nonsmooth}.

To prove $t \in S(X)$, we can use a similar argument as in the proof Theorem \ref{13-homo nonsmooth}.  The only difference is that in the present situation $O(X)$ is not an orbit of $X$, so instead we must use the result of Lemma \ref{aba1/4}, that for each $e \in E(X)$ the two orbits of ordinary points in $(xe)$ both accumulate on $e$.

It remains to prove that $E(X)$ is dense and $E(X) \subset S(X)$.

If $S(X) \cap O(X) = \emptyset$, then as in the proof of Theorem \ref{13-homo nonsmooth}, since $S(X)$ is dense it follows that $E(X) \subset S(X)$, and hence $E(X)$ is dense as well.

Assume for the remainder of the proof that $S(X) \cap O(X) \neq \emptyset$.  In this case the four orbits of $X$ are:
\[ \{t\}, \; E(X), \; O_1 = O(X) \cap S(X), \textrm{ and } O_2 = O(X) - S(X) .\]

From Lemmas \ref{aba1/4} and \ref{Etsmooth}, we conclude that $E(X) \subset S(X)$.  To prove $E(X)$ is dense, according to Lemma \ref{T,O-T} it suffices to prove that $\C \big( E(X) \big) \cap O(X) \neq \emptyset$, for in this case $O(X) \subset \C \big( E(X) \big)$ and hence $\C \big( E(X) \big) = X$.

Suppose for a contradiction that $\C \big( E(X) \big) \cap O(X)$ is empty.  Since $S(X)$ is dense, it follows that $O(X) \subset \C(O_1)$.  Hence, $O_1$ is dense in $X$.

We make the following claim:

\begin{itemize}
\item[($\star$)] For any $x \in O_2$, there exist $a,b \in O_1$ such that $x \in (ab) \subset O_2$.
\end{itemize}

To see this, suppose $x \in O_2$ and let $e \in E(X)$ be such that $x \in (te)$.  Since $x \notin S(X)$, there is a sequence of arcs $\{tx_n\}_n$ such that $x_n \to x$ but $tx_n \to Y \neq tx$, where $Y$ is a subcontinuum of $X$.  Since $t \in S(X)$, we must have $Y = tx'$ for some $x' \in (te)$ with $x \in (tx')$.  Clearly this implies that $[xx') \subset O_2$.  Since $O_2$ is an orbit, it follows that every point of $O_2$ is part of an interval in $O_2$.  The claim ($\star$) then follows from Lemma \ref{aba1/4}.

Now take any $x \in O_2$.  Because $O_1$ is dense, there is a sequence $\{a_n\}_n$ of points in $O_1$ such that $a_n \to x$.  For each $n$ let $e_n \in E(X)$ such that $a_n \in (te_n)$.  We can assume that $\{e_n\}_n$ is convergent to a point $e \in \C \big( E(X) \big) \subset E(X) \cup \{t\}$.  It follows easily from the fact that $E(X) \cup \{t\} \subset S(X)$ that $e \in E(X)$, and $x \in te = \lim_{n \to \infty} te_n$.

Recall that by \cite[Theorem 7.5, p.311]{Lelek}, $E(X)$ is a $G_\delta$ orbit, so we can apply Effros' theorem ($\dagger$) to it.  Put $\varepsilon = \frac{1}{2} d \big( x, O_1 \cap te) \big)$.  Observe that $\varepsilon > 0$ by ($\star$).  Let $\delta$ be given by Effros' theorem ($\dagger$) and choose $n \in \N$ such that $d(x,a_n) < \varepsilon$ and $d(e,e_n) < \delta$.  Then there is an $\varepsilon$-homeomorphism $h: X \to X$ such that $h(e_n) = e$.  Note that $h(te_n) = te$ and $d(x,h(a_n)) \geq 2\varepsilon$, because $h(a_n) \in O_1 \cap te$.  So
\[ d \big( a_n,h(a_n) \big) \geq d \big( x,h(a_n) \big) - d(x,a_n) \geq 2\varepsilon - \varepsilon = \varepsilon .\]
This is a contradiction since $d \big( p,h(p) \big) < \varepsilon$ for all $p \in X$.  Hence, $O(X) \cap \C \big( E(X) \big) \neq \emptyset$.

Now, from Lemma \ref{T,O-T} we have that $O(X) \subset \C \big( E(X) \big)$.  Therefore $E(X)$ is dense.
\end{proof}

\section{Questions}

Theorems \ref{13-homo nonsmooth} and \ref{14-homo nonsmooth} provide some information about how a $\frac{1}{3}$-homogeneous or $\frac{1}{4}$-homogeneous non-smooth fan would have to look, but as yet we are not aware of any examples of such objects.  To complete the classification of all $\frac{1}{3}$-homogeneous or $\frac{1}{4}$-homogeneous fans, one must answer the following question:

\begin{prob}
\label{nonsmooth exist}
Does there exist a $\frac{1}{3}$-homogeneous non-smooth fan?  a $\frac{1}{4}$-homogeneous non-smooth fan?
\end{prob}

We have confined our attention here to fans, as they comprise a simple class of dendroids, and hence make a good starting point for the study of homogeneity degree of dendroids.  In the classification of all $\frac{1}{3}$-homogeneous dendrites given in \cite{13-dendrites}, many interesting dendrites appear, including the locally connected fans, the universal ($n$-branching) dendrites, and the ($n$-branching) Gehman dendrites.  It would be interesting to extend this to a classification of all $\frac{1}{3}$-homogeneous dendroids, perhaps beginning with smooth dendroids.

\begin{prob}
\label{13-homo dendroids}
What are all the $\frac{1}{3}$-homogeneous (smooth) dendroids?
\end{prob}

\bibliographystyle{amsplain}
\bibliography{FansHomogeneity}

\end{document}